\documentclass{article}
\usepackage[british]{babel}
\usepackage{amsmath,amssymb,latexsym,array}
\usepackage[margin=0.5in]{geometry}
\usepackage{longtable}

\setlength{\parskip}{0.5\baselineskip} 
\setlength{\parindent}{0pt} 

\newcommand{\tmmathbf}[1]{\ensuremath{\boldsymbol{#1}}}
\newcommand{\tmop}[1]{\ensuremath{\operatorname{#1}}}
\newcommand{\tmtextbf}[1]{\text{{\bfseries{#1}}}}
\newcommand{\tmtextit}[1]{\text{{\itshape{#1}}}}
\newcommand{\tmtextup}[1]{\text{{\upshape{#1}}}}
\newenvironment{itemizedot}{\begin{itemize} }{\end{itemize}}
\newenvironment{proof}{\noindent\textbf{Proof\ }}{\hspace*{\fill}$\Box$\medskip}
\newtheorem{conjecture}{Conjecture}
\newtheorem{definition}{Definition}
\newtheorem{lemma}{Lemma}
\newtheorem{proposition}{Proposition}

\begin{document}

\title{A conjecture concerning $\ast$-algebras that unifies some matrix
decompositions}

\author{
  Ran Gutin\\
  Imperial College London
}

\maketitle

\begin{abstract}
  In this note, we propose a simple-looking but broad conjecture about
  star-algebras over the field of real numbers. The conjecture enables many
  matrix decompositions to be represented by star-algebras and star-ideals.
  This paper is written for people with a background in representation theory
  and module theory. The motivation for investigating this is the possibility
  of expressing polymorphic algorithms in numerical and theoretical linear
  algebra. This is similar to but different from algebraic (semiring based)
  approaches to dynamic programming. We prove certain cases of the conjecture.
\end{abstract}

\section{Introduction}

In this note, we propose a conjecture about finite-dimensional $\ast$-algebras
(also called involution algebras) {\cite{schmudgen2020invitation}} which we
think has applications in numerical and theoretical linear algebra. Our
conjecture is that a simple-looking but broad generalisation of the spectral
theorem is true for all finite-dimensional $\ast$-algebras over the field
$\mathbb{R}$. We provide a proof for some cases.

We state the conjecture:

\begin{conjecture}
  Let $H$ be a self-adjoint element (that is, satisfying $H^{\ast} = H$) of a
  finite-dimensional $\ast$-algebra $\mathcal{A}$ over $\mathbb{R}$. Let $k
  \in \mathbb{N}$ be the maximum number for which the following decomposition
  exists:
  \[ H = \sum_{i = 1}^k P_i H P_i, \]
  where for all $i, j \in [k]$:
  \begin{itemizedot}
    \item $P_i \in \mathcal{A},$
    
    \item $P_i^{\ast} = P_i,$
    
    \item $P_i P_j = \delta_{i j} P_i,$
    
    \item $P_i \neq 0$.
  \end{itemizedot}
  Consider another such decomposition for the same $H$:
  \[ H = \sum_{i = 1}^k Q_i H Q_i . \]
  Then there exists a permutation $\sigma \in S_k$ and a $U \in
  \mathcal{A}$ such that:
  \begin{itemizedot}
    \item $U^{\ast} = U^{- 1}$,
    
    \item $U H U^{\ast} = H$,
    
    \item for all $i \in [k]$: $U Q_i U^{\ast} = P_{\sigma (i)} .$
  \end{itemizedot}
\end{conjecture}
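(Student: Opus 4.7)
The plan is to reduce the problem to the commutant of $H$, and then apply the structure theory of finite-dimensional $\ast$-algebras over $\mathbb{R}$ together with a classical conjugacy result for orthogonal projection systems.

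First I would observe that each $P_i$ automatically commutes with $H$: multiplying the identity $H = \sum_j P_j H P_j$ on the left by $P_i$ and using $P_i P_j = \delta_{ij} P_i$ gives $P_i H = P_i H P_i$, while multiplying on the right yields $H P_i = P_i H P_i$, whence $[P_i, H] = 0$; the same holds for the $Q_i$. Set $\mathcal{B} = \{X \in \mathcal{A} : X H = H X\}$. Because $H^{\ast} = H$, the commutant $\mathcal{B}$ is a $\ast$-subalgebra of $\mathcal{A}$ that contains every $P_i$ and $Q_i$. Crucially, any unitary $U \in \mathcal{B}$ satisfies $U H U^{\ast} = U U^{\ast} H = H$ automatically, so the constraint $U H U^{\ast} = H$ is free; the problem reduces to producing a permutation $\sigma$ and a unitary $U \in \mathcal{B}$ with $U Q_i U^{\ast} = P_{\sigma(i)}$.

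Next, I would invoke the structure of $\mathcal{B}$ as a finite-dimensional $\ast$-algebra over $\mathbb{R}$. Its Jacobson radical $J$ is a $\ast$-ideal, and by Wedderburn--Artin the semisimple quotient $\mathcal{B}/J$ decomposes as a finite direct product of $\ast$-simple real algebras; each such factor is one of the standard types, namely a matrix algebra over $\mathbb{R}$, $\mathbb{C}$, or $\mathbb{H}$ equipped with an involution of the first or second kind, or a swap algebra $A \oplus A^{\mathrm{op}}$. Within each factor, maximal pairwise-orthogonal systems of nonzero self-adjoint idempotents correspond to orthogonal decompositions of the underlying module into minimal $\ast$-invariant summands, and any two such decompositions are conjugate by a unitary of the associated classical group --- essentially the finite-dimensional spectral theorem. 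Matching factor by factor yields a permutation $\sigma$ and a unitary $\bar U \in \mathcal{B}/J$ with $\bar U \bar Q_i \bar U^{\ast} = \bar P_{\sigma(i)}$.

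The main obstacle is lifting from $\mathcal{B}/J$ back to $\mathcal{B}$. Self-adjoint idempotents lift modulo any nilpotent ideal, and a lift can be made $\ast$-invariant by inner conjugation by $\exp$ of an element of the skew-adjoint part of $J$ (well defined because $J$ is nilpotent); unitaries admit an analogous lift. After lifting $\bar U$ to some unitary $U_0 \in \mathcal{B}$ one has $U_0 Q_i U_0^{\ast} \equiv P_{\sigma(i)} \pmod{J}$, and the residual error is absorbed by inner conjugation by an element of the unipotent group $1 + J$, using that any two self-adjoint idempotent lifts of a common class modulo a nilpotent $\ast$-ideal are inner-conjugate inside $\mathcal{B}$. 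All correcting elements live in $\mathcal{B}$ and therefore preserve $H$. A secondary subtlety I would still have to handle is the treatment of those $P_i$ with $P_i H = 0$, which lie in the $\ast$-ideal $N = \{X \in \mathcal{B} : X H = 0\}$: the maximality of $k$ forces these to be maximally refined inside $N$, and their matching across two decompositions is controlled by the $\ast$-structure of $N$ (and its interaction with $J$) rather than by $H$ itself, so the same reduction has to be applied a second time to $N$.
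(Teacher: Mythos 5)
The statement you are proving is stated in the paper as a \emph{conjecture}: the paper only establishes it for matrix algebras over division $\ast$-algebras and for matrix algebras over local rings with \emph{standard} involutions, so your proposal is attempting strictly more than the paper does. Your architecture (pass to the commutant $\mathcal{B}$ of $H$, split off the radical $J$, settle the semisimple quotient by classical form theory, then lift) is the natural one and is close in spirit to the paper's Lemma \ref{additive-implies-multiplicative} and to Sergeichuk's cited work. But the load-bearing steps are exactly the ones you assert rather than prove. The decisive gap is the correction step: you propose to absorb the residual error ``by inner conjugation by an element of the unipotent group $1+J$,'' and you appeal to the fact that idempotent lifts congruent modulo a nilpotent ideal are inner-conjugate. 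Elements of $1+J$ are not unitary, and conjugacy of the two idempotent systems by an \emph{invertible} element is just ordinary Krull--Schmidt; upgrading it to conjugacy by a \emph{unitary} (an element with $U^{\ast}=U^{-1}$) is precisely the content of the conjecture. To close this you must show that the intertwiner $v=\sum_i P_{\sigma(i)}\,U_0Q_iU_0^{\ast}\in 1+J$ can be replaced by a unitary, which requires a self-adjoint square root of $v^{\ast}v$ commuting with all the idempotents --- the paper's ``polar decomposition trick.'' The paper's own square-root lemma is proved only for \emph{standard} involutions, and its Section on obstacles warns that exactly these polar/lifting arguments are where published proofs of special cases go wrong; your ``$\exp$ of a skew-adjoint element of $J$'' does not obviously produce the required self-adjoint, commuting correction.

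Two further gaps. First, the semisimple step is not ``essentially the finite-dimensional spectral theorem'': it is Witt's extension theorem together with Sylvester's law of inertia for $\varepsilon$-hermitian forms over $\mathbb{R}$, $\mathbb{C}$ and $\mathbb{H}$ for involutions of both kinds, plus the exchange case; the paper's treatment of even the division-algebra cases needs the quaternionic Jordan form and a polar decomposition for the nonstandard quaternionic involution that it can only source from a MathOverflow post. You must also verify that the multiset of unitary-conjugacy classes of minimal self-adjoint idempotents occurring in a complete system is an invariant --- this is where the base field $\mathbb{R}$ is genuinely used (over $\mathbb{Q}$ it fails, consistent with the paper's remark that the conjecture is false over general fields), so it cannot be waved through. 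Second, your reduction is incomplete on the maximality side: you need that maximality of $k$ forces $\sum_i P_i = 1$ (otherwise adjoin $1-\sum_i P_i$) and forces each $\bar P_i$ to be a \emph{minimal} self-adjoint idempotent of $\mathcal{B}/J$, which again requires lifting self-adjoint idempotents and their splittings across $J$ while preserving self-adjointness and orthogonality. Once $\sum_i P_i = 1$ is in hand, the projections with $P_iH=0$ are handled uniformly by the same argument, so your closing paragraph proposing a separate ``second application to $N$'' is a symptom of the reduction not having been carried out precisely rather than a genuine extra case. As it stands the proposal is a plausible outline, not a proof.
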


By assuming the conjecture, we can pick $\ast$-algebras which in some sense
\tmtextit{represent} certain matrix decompositions (with prior investigation
here {\cite{gutin2023unitary}}). By a matrix decomposition, we mean a way of
writing matrices as a product of other matrices -- a connection which is
partly justified by lemma \ref{additive-implies-multiplicative}. This
unification of some matrix decompositions emphasises the uniqueness aspect, as
opposed to the existence aspect (which is made trivial), of those
decompositions. One of the motivations is that the use of $\ast$-algebras
allows computer code written to compute \tmtextit{one} decomposition to be
directly used to compute different ones (related to but different from the somewhat well-known ideas in 
{\cite{shaikhha2019finally,huang2008advanced}}). Note that we will not
develop these computing applications here because this is a theoretical paper.

We demonstrate the correspondence this creates using some examples below.

\section{Consequences for different $\ast$-algebras}

We're going to show how this conjecture easily re-derives some existing matrix
decompositions. These re-derivations are justified by the observation (later
proven in lemma \ref{additive-implies-multiplicative}) that the conjecture is
sometimes equivalent to the statement that over certain $\ast$-algebras, the
conjecture is equivalent to the cancellation property of unitary similarity.

\subsection{Matrices over the complex numbers, with conjugate-transpose as
their involution}

Let $H$ be an $n \times n$ Hermitian matrix over $\mathbb{C}$. We recall the
spectral theorem from linear algebra:
\[ H = V D V^{\ast}, \]
and recall that we're interested in decompositions of the form
\[ H = \sum_{i = 1}^k P_i H P_i, \]
for maximum $k$. In fact, we may obtain this by taking each column of $V$,
which we will call $v_i$, and letting $P_i = v_i v_i^{\ast}$. We get $k = n$
and that $P_i H P_i = \lambda_i v_i v_i^{\ast}$. So we have:
\[ H = \sum_{i = 1}^k P_i H P_i = \sum_{i = 1}^n \lambda_i v_i v_i^{\ast} . \]
This illustrates (and sketches the proof for) the conjecture for $\mathbb{C}$,
and shows that the conjecture generalises the spectral theorem.

\subsection{Matrices over the double numbers, with conjugate-transpose as
their involution}

Let ${}^2\mathbb R$ denote the $*$-algebra of \emph{double numbers}: The underlying algebra is $\mathbb R \oplus \mathbb R$, and the involution is $(a,b)\mapsto(b,a)$.

Given any finite-dimensional (\tmtextit{without} star) $\mathbb{R}$-algebra
$R$, we can make a $\ast$-algebra $R \oplus R^{\tmop{op}}$, with involution
being $(x, y) \mapsto (y, x)$ \cite{hahn2013classical}. Note that for some such algebras $R$ (but not all), there exists some (any) involution, and we therefore have $R^{\tmop{op}} \cong R$. In those cases, we have $R \oplus R^{\tmop{op}} \cong R \oplus R \cong R \otimes {}^2\mathbb R$, where the algebra $R$ in the last expression is equipped with any involution.

Observe that the conjecture for $M_n (R) \oplus M_n (R)^{\tmop{op}}$ is
equivalent to the invariant subspace decomposition for matrices in $M_n (R)$. The existence of such a decomposition is a corollary of the Krull-Schmidt theorem {\cite{basicalgebra}} -- albeit for $R \in \{ \mathbb{R}, \mathbb{C}, \mathbb{H} \}$ it is better described as the Jordan Normal Form or primary decomposition.

In particular, this means that the conjecture for $M_n ({}^2\mathbb R)$ -- that is, $n \times n$ matrices over the double numbers -- is
equivalent to the Jordan Normal Form for $\mathbb{R}$-matrices.

\subsection{General picture}

In general, we obtain a correspondence between on the one hand:
\begin{itemizedot}
  \item Matrix decompositions,
\end{itemizedot}
and
\begin{itemizedot}
  \item finite-dimensional $\ast$-algebras over $\mathbb{R}$ along with an
  ideal.
\end{itemizedot}

The corresponding decomposition is obtained by considering Hermitian matrices over the given $*$-algebra whose elements all belong to the specified ideal, and then (nearly all the time) considering block-diagonal canonical forms of those matrices under unitary similarity.

We make this explicit using a table, where we exhaust all indecomposable 2 and
3-dimensional $\ast$-algebras (which can be done with the aid of the
Wedderburn-Malcev theorem) and consider the decomposition which the conjecture implies for all of its matrix algebras:

\begin{longtable}{m{4cm}|m{3cm}|m{1.25cm}|m{7cm}|m{1.25cm}}
  \hline
  Algebra & Involution & $\ast$-ideal & Corresponding decomposition & Tame?\\
  \hline\hline 
  $\mathbb{R}$ & $\tmop{id}$ & $\langle 1 \rangle$ & Spectral theorem & Y\\
  \hline
  $\mathbb{C}$ & $a + b i \mapsto a - b i$ & $\langle 1 \rangle$ & Spectral theorem for $\mathbb{C}$-Hermitian matrices & Y\\
  \hline
  $\mathbb{C}$ & $\tmop{id}$ & $\langle 1 \rangle$ & Complex-symmetric spectral theorem & Y\footnote{follows from observation that all square $\mathbb{C}$-matrices are similar to $\mathbb{C}$-symmetric matrices, combined with the polar decomposition trick given for instance in \cite{kaplansky2003linear}}\\
  \hline
  $\mathbb{R} [X] / (X^2)$ & $\tmop{id}$ & $\langle 1 \rangle$ & Spectral decomposition of $H + \varepsilon H' + o (\varepsilon)$ where $H, H' \in M_n(\mathbb R)$ and both are symmetric & Y \cite{perturbationanalytic, dualnumber}\\
  \hline
  $\mathbb{R} [X] / (X^2)$ & $a + b X \mapsto a - b X$ & $\langle 1 \rangle$ & Spectral decomposition of $H + \varepsilon H' + o (\varepsilon)$ where $H, H' \in M_n(\mathbb R)$; $H$ is symmetric; $H'$ is skew-symmetric & Y \cite{perturbationanalytic, dualnumber}\\
  \hline
  $\mathbb{R} [X] / (X^2)$ & $a + b X \mapsto a - b X$ & $\langle X \rangle$ & Spectral theorem for skew-symmetric $\mathbb{R}$-matrices & Y\\
  \hline
  $\mathbb{R} \oplus \mathbb{R}$ & $(a, b) \mapsto (b, a)$ & $\langle 1 \rangle$ & Jordan Normal Form & Y\\
  \hline
  $(\mathbb{R} \oplus \mathbb{R}) + \mathbb{R} \delta$ \footnote{\label{threed-arithmetic}with relations $\delta^2 = 0$, $\delta(x,y) = (y,x)\delta = x\delta$}  & Unique one where $\delta \mapsto \delta$ & $\langle 1 \rangle$ & Canonical basis for a pair $(Q,L)$ consisting of a quadratic form $Q$ and an operator $L$ self-adjoint with respect to $Q$ & Y {\cite{gohberg2006indefinite}}\\
  \hline
  $(\mathbb{R} \oplus \mathbb{R}) + \mathbb{R} \delta$ \textsuperscript{\ref{threed-arithmetic}}  & Unique one where $\delta \mapsto - \delta$ & $\langle 1 \rangle$ & Canonical basis for a pair $(\omega,L)$ consisting a of symplectic form $\omega$ and an operator $L$ self-adjoint with respect to $\omega$ & Y {\cite{gohberg2006indefinite}}\\
  \hline
  $(\mathbb{R} \oplus \mathbb{R}) + \mathbb{R} \delta$ \textsuperscript{\ref{threed-arithmetic}} & Unique one where $\delta \mapsto \delta$ & $\langle \delta \rangle$ & Sylvester's Law of Inertia & Y\\
  \hline
  $(\mathbb{R} \oplus \mathbb{R}) + \mathbb{R} \delta$ \textsuperscript{\ref{threed-arithmetic}}  & Unique one where $\delta \mapsto - \delta$ & $\langle \delta \rangle$ & Analogue of Sylvester's Law of Inertia for skew-symmetric matrices\footnote{The decomposition is for when $M = -M^T \in M_n(\mathbb R)$; then there is an invertible $P \in M_n(\mathbb R)$ such that $P M P^T = \begin{bmatrix}0 & -1 \\ 1 & 0\end{bmatrix}^{\oplus \tmop{rank}(M)/2} \oplus [0]^{\oplus\tmop{null}(M)}$} & Y\\
  \hline
  $\mathbb{R} [X] / (X^3)$ & $\tmop{id}$ & $\langle 1 \rangle$ & 2nd-order perturbation theory of spectral theorem & Y \cite{perturbationanalytic}\\
  \hline
  $\mathbb{R} [X] / (X^3)$ & $X \mapsto - X$ & $\langle 1 \rangle$  & 2nd-order perturbation theory of invariant subspace decomposition where skew-symmetric part is infinitesimal & Y \cite{perturbationanalytic}\\
  \hline
  $\mathbb{R} [X] / (X^3)$ & $X \mapsto - X$ & $\langle X \rangle$  &  Spectral decomposition of $H + \varepsilon H' + o (\varepsilon)$ where $H, H' \in M_n(\mathbb R)$;  $H$ is skew-symmetric; $H'$ is symmetric & Y\\
  \hline
  $\mathbb{R} [X, Y] / (X^2, Y^2, X Y)$ & $X \mapsto X, Y \mapsto - Y$ & $\langle 1 \rangle$ & Spectral decomposition of $H + \varepsilon H' + o (\varepsilon)$ where $H, H' \in M_n(\mathbb R)$; $H$ is symmetric; $H'$ is an \emph{arbitrary} matrix & N\footnote{\label{wildeven}Wild even when restricted to an ideal}\\
  \hline
  $\mathbb{R} [X, Y] / (X^2, Y^2, X Y)$ & $X \mapsto X, Y \mapsto Y$ & $\langle 1 \rangle$ & 1st-order perturbation theory of spectral theorem, with 2 independent perturbations in symmetric directions & N\textsuperscript{\ref{wildeven}}\\
  \hline
  $\mathbb{R} [X, Y] / (X^2, Y^2, X Y)$ & $X \mapsto - X, Y \mapsto - Y$ & $\langle 1 \rangle$ & 1st-order perturbation theory of spectral theorem, with 2 independent perturbations in skew-symmetric directions & N\textsuperscript{\ref{wildeven}}\\
  \hline
  $\mathbb{R} [X, Y] / (X^2, Y^2, X Y)$ & $X \mapsto X, Y \mapsto - Y$ & $\langle X, Y \rangle$ & Block-diagonal form for $\mathbb{R}$-matrices under orthogonal similarity & N {\cite[sec. 4]{sergeichuk1998unitary}}\\
  \hline
  $\mathbb{R} [X, Y] / (X^2, Y^2, X Y)$ & $X \mapsto X, Y \mapsto Y$ & $\langle X, Y \rangle$ & Block-diagonal form for pairs of symmetric $\mathbb{R}$-matrices under orthogonal similarity. & N {\cite[sec. 4]{sergeichuk1998unitary}}\\
  \hline
  $\mathbb{R} [X, Y] / (X^2, Y^2, X Y)$ & $X \mapsto - X, Y \mapsto - Y$ & $\langle X, Y \rangle$ & Block-diagonal form for pairs of skew-symmetric $\mathbb{R}$-matrices under orthogonal similarity & N {\cite[sec. 4]{sergeichuk1998unitary}}\\
  \hline
\end{longtable}
The above lists all 2- and 3-dimensional cases. Note that we treat much of the
perturbation theory of matrix decompositions as -- in some sense -- being
matrix decompositions in their own right.

We also consider some notable 4-dimensional cases. There are too many cases to
exhaustively list here, so we've listed only a few below. Pay attention to the corresponding decompositions. Note that these are not the only tame ones in 4 dimensions.
\begin{longtable}{m{4cm}m{3cm}m{1.25cm}m{7cm}m{1cm}}
  \hline
  Algebra & Involution & $\ast$-ideal & Corresponding decomposition & Tame?\\
  \hline
  \hline
  $(a, b) + \delta (a', b')$ \footnote{where $\delta (a', b') = (b', a') \delta$ and $\delta^2 = 0$} & Any which sends $(x, y) \mapsto (y, x)$ &
  $\langle \delta \rangle$ & Singular Value Decomposition & Y\\
  \hline
  $(a + b i) + \delta (a' + b' i)$ \footnote{\label{where-takagi-alg}where $\delta i = -i \delta$ and $\delta^2 = 0$} & $\delta \mapsto \delta, i \mapsto -i$
  & $\langle \delta \rangle$ & Autone-Takagi decomposition {\cite{horn2012matrix}} & Y\\
  \hline
  $(a + b i) + \delta (a' + b' i)$ \textsuperscript{\ref{where-takagi-alg}} & $\delta \mapsto -\delta, i \mapsto -i$ & $\langle \delta \rangle$ & Skew-symmetric Takagi  decomposition& Y\\
  \hline
  $M_2 (\mathbb{R})$ & Matrix adjugate & $\langle 1 \rangle$ & ``Symplectic spectral theorem'': Analogue of the spectral theorem for $2n \times 2n$ symplectic-self adjoint matrices under symplectic similarity & Y\footnote{Tame for similar reasons to case $(\mathbb C, \tmop{id}, \langle 1 \rangle)$}\\
  \hline
  $\mathbb R[X, Y]/(X^2, Y^2)$ & $X \mapsto -X, Y \mapsto Y$ & $\langle X \rangle$ & Spectral decomposition of $H + \varepsilon H' + o (\varepsilon)$ where $H$ and $H'$ are skew-symmetric & Y\\
  \hline
\end{longtable}

By taking tensor products of $\ast$-algebras and their ideals, one can combine features of
different decompositions. For instance:

\begin{itemize}
  \item The somewhat intricate -- but still tame! -- conjugate-less analogue of SVD for complex matrices considered in {\cite{horn}} is obtained by tensoring the representative of the SVD -- which is 4-dimensional -- with the representative for the complex-symmetric spectral theorem -- which is 2-dimensional. It should be intuitively plausible that this combines their features in a compelling way.
  \item Referring to the same paper {\cite{horn}}, the main decomposition there can be seen as the analogue of Singular Value Decomposition for matrices over the $*$-algebra of double numbers\footnote{Recall that this is the algebra $\mathbb R \oplus \mathbb R$ equipped with the involution $(a,b)\mapsto (b,a)$}. Since a matrix over the double numbers is essentially a pair of real matrices, this creates an analogue of SVD for pairs of real matrices. Equivalently, this may be obtained by using tensor products: A representative of the main decomposition of that paper can be obtained by tensoring the triple representing the Singular Value Decomposition with the one respresenting the Jordan Normal Form. The resulting triple gives an involution and an ideal of an $8$-dimensional algebra.
\end{itemize}

\section{Obstacles to a general proof}

While there isn't much literature on $\ast$-algebras, there is plenty on
$C^{\ast}$-algebras. Unfortunately, much of the machinery does not generalise
well to $\ast$-algebras, as we will now show {\cite{schmudgen2020invitation}}.
And already in 2 dimensions, most $\ast$-algebras are not $C^{\ast}$-algebras.
This situation is particularly interesting because much of the general theory
in the literature concerning $\ast$-algebras that aren't $C^{\ast}$-algebras
appears to concern algebras consisting of unbounded (that is, discontinuous)
linear operators on infinite-dimensional Hilbert spaces -- it's difficult to
apply their methods to problems like this one.

One piece of machinery in the literature is ``star representations'' or
\tmtextit{$\ast$-representations}. This is the same thing as a representation
of an algebra, but with the additional requirement that the involution $\ast$
be sent to the conjugate-transpose on complex matrices. This is already
impossible for most 2-dimensional $\ast$-algebras. We therefore have a curious
situation that while every algebra has a representation, almost no
$\ast$-algebra has a $\ast$-representation.

Much of the representation theory of topological groups is developed for
compact groups {\cite{artin1998}}. However, the unitary elements of our
$\ast$-algebras usually don't form compact groups.

We've noticed that results in the literature which \tmtextit{claim} to solve
(effectively) special cases of our conjecture often have erroneous proofs. For
instance, we've tried to find generalisations of Specht's theorem
{\cite{kaplansky2003linear,horn2012matrix}} to prove additional cases
of our conjecture, but the proofs we found in the literature for those
generalisations were wrong. We confirmed this with the authors. We will not
cite examples for obvious reasons.

Trying to generalise the conjecture may be fraught: The conjecture over arbitrary fields is false, as follows from basic Witt
theory and the failure of Sylvester's Law of Inertia (undermining a prediction of the generalised
conjecture) over such fields. In spite of these obstacles, some steps to
produce something like a theory of matrix decompositions for finite fields
have been done {\cite{guralnick}}. There is prior work unifying the
classical groups {\cite{hahn2013classical}} instead of the decompositions
(with an eye towards algebraic K-theory), but the structures considered (form
rings, form modules) are quite different, and we are not sure how to apply
those general tools here.

\section{Proofs of certain cases}

We can prove some cases of the conjecture.

\begin{definition}
  Self-adjoint matrices over a $\ast$-algebra $\mathcal{A}$ satisfy
  \tmtextbf{the cancellation property under unitary similarity} if whenever
  $A$ is unitarily similar to $A'$ and $A \oplus B$ is unitarily similar to
  $A' \oplus B'$, then $B$ is unitarily similar to $B'$. Note that the matrices $A,A',B,B'$ are understood to be self-adjoint.
\end{definition}

\begin{lemma}
  \label{additive-implies-multiplicative}If for a given local $\ast$-algebra
  $\mathcal{A}$, if the self-adjoint matrices over $\mathcal{A}$ satisfy the
  cancellation property under unitary similarity, then for all $n \in
  \mathbb{N}$ the conjecture holds for $M_n (\mathcal{A})$.
\end{lemma}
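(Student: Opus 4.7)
My plan is to translate each maximal decomposition $H = \sum_{i=1}^k P_i H P_i$ into a block-diagonal normal form for $H$ under unitary similarity, then use the cancellation hypothesis together with maximality of $k$ to obtain a Krull--Schmidt-style matching of the two decompositions, and finally reassemble the block-level unitaries into the global unitary $U$ required by the conjecture.

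For the reduction I would exploit that $\mathcal{A}$ is local: every finitely generated projective $\mathcal{A}$-module is free, so each $P_i \mathcal{A}^n$ is free of some rank $n_i$. The orthogonality relation $P_i P_j = \delta_{ij} P_i$ together with self-adjointness makes the submodules $P_i \mathcal{A}^n$ pairwise orthogonal with respect to the standard Hermitian form $\langle x, y \rangle = \sum_l x_l^* y_l$ on $\mathcal{A}^n$. I would then try to produce a unitary $V \in M_n(\mathcal{A})$ carrying this orthogonal module decomposition to the standard coordinate one, so that $V P_i V^* = E_i$ is a standard block projector and $V H V^* = 0 \oplus H_1 \oplus \cdots \oplus H_k$ is block-diagonal with each $H_i \in M_{n_i}(\mathcal{A})$ self-adjoint. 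Maximality of $k$ then forces each $H_i$ to be \emph{indecomposable} with respect to the analogous decomposition relation, since any refinement would contradict the choice of $k$.

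Performing the same construction on the $Q_j$-decomposition gives a unitary $W$ with $W H W^* = 0 \oplus H'_1 \oplus \cdots \oplus H'_k$, again with indecomposable blocks. The two block-diagonal matrices are then unitarily similar over $\mathcal{A}$, so I would induct on $k$: at each step, use indecomposability and an exchange-style argument (of the kind that produces Krull--Schmidt uniqueness from cancellation) to match some $H'_j$ with an $H_{\sigma(j)}$ via a unitary $U_j$, then invoke the hypothesised cancellation property to remove that pair and reduce to a strictly smaller problem. Assembling the $U_j$'s together with a block-permutation matrix realising $\sigma$ yields a unitary $S$ with $S (H'_1 \oplus \cdots \oplus H'_k) S^* = H_1 \oplus \cdots \oplus H_k$, and then $U := V^* S W$ is a unitary in $M_n(\mathcal{A})$ satisfying $U H U^* = H$ and $U Q_j U^* = P_{\sigma(j)}$.

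The main obstacle splits into two parts. The Gram--Schmidt-type step that produces $V$ is delicate: normalising a vector $v \in P_i \mathcal{A}^n$ requires $\langle v, v \rangle$ to be a unit of $\mathcal{A}$, yet the induced Hermitian form on $\mathcal{A}/\mathfrak{m}$ need not be anisotropic (witness $\mathbb{C}$ equipped with trivial involution, where $(1,i)$ is isotropic). I expect this to be handled either by permitting non-orthonormal reference frames inside each summand or by arguing intrinsically with modules rather than matrices. The second, and in my view harder, difficulty is extracting Krull--Schmidt uniqueness from cancellation alone: cancellation permits removal of matched summands but does not directly produce matches, so I anticipate needing an Azumaya-style exchange lemma adapted to self-adjoint matrices under unitary similarity. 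My hope is that locality of $\mathcal{A}$, combined with the strong indecomposability forced by maximality of $k$, supplies enough rigidity (e.g. that endomorphism rings of indecomposable blocks are themselves local) to make such an exchange argument go through.
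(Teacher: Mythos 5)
Your skeleton is the same as the paper's: convert each additive decomposition of $H$ into a block-diagonal form under unitary similarity, match blocks, cancel, and reassemble the global unitary. The two difficulties you isolate are exactly the two places where the paper has to do work, and the second one is a genuine gap in your proposal as it stands. For the first (producing $V$), the paper takes the escape route you yourself suggest: it does \emph{not} orthonormalise inside each summand. By Kaplansky's theorem each $\tmop{im}(P_i)$ is free; one takes an arbitrary basis of each, assembles the invertible matrix $[U_1\mid\cdots\mid U_k]$, and applies the polar-decomposition trick of Kaplansky to replace this invertible change of basis by a unitary one, so no anisotropy of the induced Hermitian form is required. For the second (producing matches, not merely cancelling them), the paper does not use an Azumaya-style exchange lemma, nor locality of endomorphism rings of the blocks. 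The idea you are missing is that \emph{maximality of $k$ itself supplies the matching}: if $E_1$ were not unitarily similar to any $F_i$, the intersections $\tmop{im}(Q_i)\cap\tmop{im}(P_1)$ together with $\tmop{im}(Q_i)\cap\tmop{im}(P_1)^{\perp}$ would give an orthogonal decomposition of $\mathcal{A}^n$ into more than $k$ nonzero summands, contradicting the choice of $k$; only after a match is found is the cancellation hypothesis invoked to strip the matched pair and recurse. So your plan is repaired by replacing the hoped-for exchange lemma with this maximality argument. (In fairness, the paper's own justification of the intersection step is terse, and polar decomposition over an arbitrary local $\ast$-algebra is only established later under a ``standard involution'' hypothesis, so your caution about both steps is not misplaced.)
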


\begin{proof}
  Consider $H = \sum_{i = 1}^k P_i H P_i = \sum_{i = 1}^k Q_i H Q_i$ for
  largest $k$.
  
  Use $\tmop{im} (P_i)$ for each $i$. Clearly, $\mathcal{A}^n \cong
  \bigoplus_i \tmop{im} (P_i)$. By Kaplansky's theorem, we have that each
  $\tmop{im} (P_i)$ is a free submodule. Now take any basis for each
  $\tmop{im} (P_i)$, put the column vectors together, and then use the polar
  decomposition {\cite{kaplansky2003linear}} trick to arrive at the
  multiplicative decomposition $H = [U_1 \mid U_2 \mid \ldots \mid U_k] (E_1
  \oplus E_2 \oplus \ldots \oplus E_k) [U_1 \mid U_2 \mid \ldots \mid
  U_k]^{\ast}$.
  
  The same can be done for the $Q_i$s to arrive at $H = [V_1 \mid V_2 \mid
  \ldots \mid V_k] (F_1 \oplus \ldots \oplus F_k) [V_1 \mid V_2 \mid \ldots
  \mid V_k]^{\ast}$.
  
  Imagine for the sake of contradiction that $E_1$ is not unitarily similar to
  any $F_i$. This then produces the decomposition $\mathcal{A}^n \cong
  \bigoplus_i (\tmop{im} (Q_i) \cap \tmop{im} (P_1)) \oplus \bigoplus_i
  (\tmop{im} (Q_i) \cap \tmop{im} (P_1)^{\perp})$, which has more than $k$
  non-zero factors. This contradicts the maximality of $k$. So there must be
  some $F_i$ unitarily similar to $E_1$. Assume without loss of generality
  that this is $F_1$. Use the cancellation property to cancel $E_1$ and $F_1$
  and arrive at the fact that $\bigoplus_{i \geq 2} E_i$ is unitarily similar
  to $\bigoplus_{i \geq 2} F_i$. In a similar way, we conclude that each $E_j$
  is unitarily similar to some $F_i$. The conclusion follows.
\end{proof}

\begin{proposition}
  The above conjecture is true when the underlying algebra of $\mathcal{A}$ is
  the ring of $n \times n$ matrices $\mathcal{M}_n (\mathcal{D})$over any
  division $\ast$-algebra $\mathcal{D}$.
\end{proposition}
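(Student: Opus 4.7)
The plan is to invoke Lemma~\ref{additive-implies-multiplicative} with the local $\ast$-algebra taken to be $\mathcal{D}$ itself. A division ring is local (its only two-sided ideals are $0$ and $\mathcal{D}$, so $0$ is the unique maximal ideal), hence this is legitimate, and the lemma reduces the conjecture for every $M_n(\mathcal{D})$ to a single statement: self-adjoint matrices over $\mathcal{D}$ satisfy the cancellation property under unitary similarity.

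For Hermitian $A$ and unitary $U$ (i.e.\ $U^{\ast}=U^{-1}$) we have $UAU^{-1}=UAU^{\ast}$, so unitary similarity of Hermitian matrices coincides with $\ast$-congruence, and the lemma's cancellation hypothesis is nothing other than isometry-cancellation of Hermitian sesquilinear forms on $\mathcal{D}^n$. The required statement -- if $A\sim A'$ and $A\oplus B\sim A'\oplus B'$ then $B\sim B'$ -- is therefore exactly \emph{Witt's cancellation theorem} for Hermitian forms over a division $\ast$-algebra, a classical result valid for any division ring with involution in characteristic different from two (see for instance \cite{hahn2013classical} and references therein). Possible degeneracy of $A$ or $B$ is handled by first splitting off the radical, which is orthogonally complemented and behaves functorially under the cancellation argument.

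The one genuine subtlety is that the involution on $\mathcal{A}=M_n(\mathcal{D})$ need not be the canonical conjugate-transpose induced by $\mathcal{D}$'s own involution: every involution on $M_n(\mathcal{D})$ has the form $X\mapsto G X^{\dagger} G^{-1}$ for some invertible $G$ with $G^{\dagger}=\pm G$, where $\dagger$ denotes the canonical conjugate-transpose. Self-adjointness for such a twisted involution corresponds to $G$-Hermitian (or $G$-skew-Hermitian) matrices, equivalently Hermitian or skew-Hermitian forms relative to a possibly modified involution on $\mathcal{D}$. Witt cancellation holds uniformly across all these variants, so the final reduction is essentially bookkeeping. I expect the main obstacle, and the bulk of the actual writing, to be exactly this identification -- matching up the lemma's ``cancellation under unitary similarity'' with Witt cancellation in the correct category of (skew-)Hermitian forms -- after which the proposition follows in one line from Lemma~\ref{additive-implies-multiplicative}.
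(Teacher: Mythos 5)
Your opening reduction is sound and close in spirit to the paper: $\mathcal{D}$ is local, so Lemma~\ref{additive-implies-multiplicative} applies, and it suffices to prove cancellation for self-adjoint matrices over $\mathcal{D}$ under unitary similarity. The paper itself instead enumerates the five division $\ast$-algebras over $\mathbb{R}$ ($\mathbb{R}$; $\mathbb{C}$ with conjugation or with $\tmop{id}$; $\mathbb{H}$ with the standard or the non-standard involution) and verifies the conjecture in each case via the spectral theorem or, for the two ``non-standard'' cases, via the Jordan Normal Form together with the fact that every square matrix is similar to a complex-symmetric one and the polar decomposition trick.

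The genuine gap is your identification of the lemma's cancellation hypothesis with Witt cancellation. Unitary similarity of self-adjoint matrices is \emph{not} the same equivalence relation as $\ast$-congruence: a unitary conjugation is a particular congruence, but congruence allows an arbitrary invertible $P$ in $PAP^{\ast}$, so its equivalence classes are strictly coarser. Over $\mathbb{R}$ with the trivial involution, $\mathrm{diag}(1,1)$ and $\mathrm{diag}(1,2)$ are congruent (same inertia) but not orthogonally similar (different eigenvalues). Witt's theorem classifies and cancels \emph{forms}; Lemma~\ref{additive-implies-multiplicative} needs cancellation for the finer invariant that remembers the self-adjoint operator itself up to unitary conjugation --- morally, the pair (form, operator), not the form alone. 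Knowing $B$ and $B'$ are congruent, which is all Witt gives you, does not yield $B$ unitarily similar to $B'$. The correct ingredient in the three standard cases is the spectral theorem (unitary similarity classes of Hermitian matrices are multisets of real eigenvalues, and multisets cancel); in the cases of $\mathbb{C}$ with $\tmop{id}$ and $\mathbb{H}$ with the non-standard involution it is the uniqueness of the Jordan Normal Form combined with the polar decomposition trick, since there orthogonal similarity of symmetric matrices is governed by ordinary similarity. Your closing remark about twisted involutions $X \mapsto G X^{\dagger} G^{-1}$ does flag a real issue that the paper's own proof also passes over silently, but it cannot be dismissed as bookkeeping once the Witt-theoretic shortcut is gone.
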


\begin{proof}
  Either $\mathcal{D}$ is:
  \begin{itemizedot}
    \item The real numbers with the trivial involution.
    
    \item The complex numbers with either the involution
    $\tmop{id}_{\mathbb{C}}$ or $(-)^{\ast}$.
    
    \item The quaternions with either the involution $t + x \tmmathbf{i} + y
    \tmmathbf{j} + z \tmmathbf{k} \mapsto t - x \tmmathbf{i} - y \tmmathbf{j}
    - z \tmmathbf{k}$ or $t + x \tmmathbf{i} + y \tmmathbf{j} + z \tmmathbf{k}
    \mapsto t - x \tmmathbf{i} + y \tmmathbf{j} + z \tmmathbf{k}$. Note that
    while it is true that the quaternions have infinitely many involutions, it
    admits only two up to isomorphism of $\ast$-algebras.
    {\cite{rodman2014topics}}
  \end{itemizedot}
  We verify the conjecture for each case in turn.
  
  Let $\mathcal{D}$ be the real numbers. The theorem is equivalent to the
  spectral theorem here.
  
  Let $\mathcal{D}$ be the complex numbers with the involution $(-)^{\ast}$.
  The theorem is equivalent to the spectral theorem here.
  
  Let $\mathcal{D}$ be the complex numbers with the involution
  $\tmop{id}_{\mathbb{C}}$. Note that every square complex matrix is similar
  to a complex symmetric matrix. Thus, given a complex-symmetric matrix $S$,
  take its Jordan Normal Form, and replace each Jordan block with the complex
  symmetric matrix which it's similar to -- giving $S \sim J_1 \oplus \ldots
  \oplus J_k$ where each $J_i$ is complex-symmetric. We have that $S$ is
  similar to another complex-symmetric matrix. But then $S$ is furthermore
  orthogonally similar to $J_1 \oplus \ldots \oplus J_k$, by the polar
  decomposition trick {\cite{kaplansky2003linear}}.
  
  Let $\mathcal{D}$ be the quaternions with the standard involution $t + x
  \tmmathbf{i} + y \tmmathbf{j} + z \tmmathbf{k} \mapsto t - x \tmmathbf{i} -
  y \tmmathbf{j} - z \tmmathbf{k}$. The theorem is equivalent to the spectral
  theorem here.
  
  Let $\mathcal{D}$ be the quaternions with the non-standard involution $t + x
  \tmmathbf{i} + y \tmmathbf{j} + z \tmmathbf{k} \mapsto t - x \tmmathbf{i} +
  y \tmmathbf{j} + z \tmmathbf{k}$. The proof here is the same as in the case
  of $\mathbb{C}$ equipped with the involution $\tmop{id}_{\mathbb{C}}$. We
  only need to observe that:
  \begin{itemizedot}
    \item an analogue of the Jordan Normal Form exists,
    {\cite{rodman2014topics}}
    
    \item every square quaternion matrix is similar to a complex-symmetric
    matrix,
    
    \item the polar decomposition generalises to this setting. This is
    presently proved in a MathOverflow post {\cite{mo}}.
  \end{itemizedot}
\end{proof}

\begin{lemma}
  Non-singular matrices over the quaternions equipped with the non-standard
  involution\tmtextbf{} $t + x \tmmathbf{i} + y \tmmathbf{j} + z \tmmathbf{k}
  \mapsto t - x \tmmathbf{i} + y \tmmathbf{j} + z \tmmathbf{k}$ admit polar
  decompositions.
\end{lemma}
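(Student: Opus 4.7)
The plan is to embed $(\mathbb{H}, \sigma)$ into $(M_2(\mathbb{C}), \text{transpose})$ as a real $\ast$-algebra and then reduce to the complex-symmetric polar decomposition already used in the proof of the preceding proposition. Explicitly, send
\[ 1 \mapsto I_2,\quad \tmmathbf{i} \mapsto \begin{pmatrix} 0 & 1 \\ -1 & 0 \end{pmatrix},\quad \tmmathbf{j} \mapsto \begin{pmatrix} \mathrm{i} & 0 \\ 0 & -\mathrm{i} \end{pmatrix},\quad \tmmathbf{k} \mapsto \begin{pmatrix} 0 & -\mathrm{i} \\ -\mathrm{i} & 0 \end{pmatrix}, \]
where $\mathrm{i}$ is the complex imaginary unit. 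A direct check verifies the quaternion relations and that matrix transpose fixes the images of $1, \tmmathbf{j}, \tmmathbf{k}$ and negates the image of $\tmmathbf{i}$, so that $(\cdot)^T$ implements the non-standard involution $\sigma$. Applied block-wise, this yields an injective $\ast$-algebra homomorphism $\Phi \colon M_n(\mathbb{H}) \hookrightarrow M_{2n}(\mathbb{C})$ with $\Phi(A^\sigma) = \Phi(A)^T$.

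For non-singular $A \in M_n(\mathbb{H})$, set $\widetilde A = \Phi(A)$ and form $\widetilde M = \widetilde A^T \widetilde A$, a complex-symmetric invertible matrix. The plan is then to find a symmetric square root $\widetilde P$ of $\widetilde M$, define $\widetilde U = \widetilde A \widetilde P^{-1}$ (so that $\widetilde U^T \widetilde U = I$ automatically), and check that both $\widetilde U$ and $\widetilde P$ lie in $\Phi(M_n(\mathbb{H}))$; pulling back then yields the decomposition $A = UP$ with $U^\sigma U = I$ and $P^\sigma = P$.

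The main obstacle will be constructing the symmetric square root $\widetilde P$ \emph{inside the image}. Taking $\widetilde P = p(\widetilde M)$ for a real-coefficient polynomial $p$ only works when the spectrum of $\widetilde M$ avoids the negative real axis, which need not hold: already $A = (\tmmathbf{j}) \in M_1(\mathbb{H})$ gives $\widetilde M = -I_2$. To get around this, I would use the characterisation of $\Phi(M_n(\mathbb{H}))$ as the fixed subalgebra of the conjugate-linear map $X \mapsto J_n X J_n^{-1}$ with $J_n = I_n \otimes \left(\begin{smallmatrix} 0 & 1 \\ -1 & 0 \end{smallmatrix}\right)$. This forces the spectrum of $\widetilde M$ to be closed under complex conjugation and equips each real eigenspace with a quaternionic structure (from $J_n$ restricted to it), in particular forcing every real eigenvalue to occur with even algebraic multiplicity and matched Jordan structure in conjugate-paired halves. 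Using this, $\widetilde P$ can be built block-by-block on generalised eigenspaces: at each pair $\{\lambda, \bar\lambda\}$ of non-real eigenvalues pick conjugate branches of $\sqrt{\,\cdot\,}$; at each real eigenvalue split the quaternionic-paired Jordan blocks evenly between the two branches $\pm\sqrt{\lambda}$ (pure imaginary for negative $\lambda$). The resulting $\widetilde P$ is symmetric, squares to $\widetilde M$, and satisfies the reality condition, hence lies in $\Phi(M_n(\mathbb{H}))$; the quaternionic pairing on real eigenspaces is the essential ingredient that the naive primary-function approach was missing.
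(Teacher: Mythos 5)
Your route is genuinely different from the paper's. The paper reduces the lemma to finding a real-coefficient polynomial square root of $M=A^{\sigma}A$ by Hermite interpolation, and deals with the obstructing case of negative real eigenvalues by perturbing $M$ and extracting a limit of square roots via Bolzano--Weierstrass. You instead pass to the complex representation and attempt a fully structural construction. Your embedding is correct (the reality map should be written $X\mapsto J_n\overline{X}J_n^{-1}$, with a conjugate, but that is clearly what you intend), your example $A=(\tmmathbf{j})$ with $\widetilde M=-I_2$ correctly shows that no real-polynomial square root can exist in general, and your structural claim is right: $\jmath(v)=J_n\bar v$ is conjugate-linear with $\jmath^2=-1$ and commutes with $\widetilde M-\lambda$ for real $\lambda$, so each $\ker(\widetilde M-\lambda)^k$ is a quaternionic subspace and the Jordan blocks at real eigenvalues come in $\jmath$-paired copies. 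If completed, your argument is arguably tighter than the paper's, which does not address why the sequence $\sqrt{M_n}$ stays bounded or why branches can be chosen consistently along the perturbation.

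There is, however, a genuine gap at the sentence ``The resulting $\widetilde P$ is symmetric.'' Away from the negative real axis you use a primary matrix function, hence a polynomial in $\widetilde M$, and symmetry is inherited for free. But at a negative real eigenvalue $\lambda$ you split $V_\lambda$ into two $\widetilde M$-invariant halves $W$ and $\jmath W$ carrying the branches $+\sqrt{\lambda}$ and $-\sqrt{\lambda}$; the resulting operator is no longer a polynomial in $\widetilde M$, so symmetry of $\widetilde M$ no longer transfers. Already in the diagonalisable case, $x\in W$, $y\in\jmath W$ and $\widetilde P^{T}=\widetilde P$ force $\sqrt{\lambda}\,x^{T}y=-\sqrt{\lambda}\,x^{T}y$, i.e.\ you need $W$ and $\jmath W$ to be orthogonal for the bilinear form $x^{T}y$ (whose restriction to $V_\lambda$ is nondegenerate). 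So the missing statement is that $V_\lambda$ admits an $\widetilde M$-invariant subspace $W$ with $V_\lambda=W\oplus\jmath W$ and $W\perp\jmath W$; an arbitrary ``even split of the paired Jordan blocks'' need not have this property, since a Jordan basis is not in general adapted to the form, and without it $\widetilde P$ need not be symmetric. This is exactly the nontrivial content of the lemma at this point and needs an argument --- for instance by simultaneously normalising the triple (nilpotent part, bilinear form, $\jmath$) on $V_\lambda$, or by quoting the canonical form of $\sigma$-Hermitian quaternionic matrices under $\sigma$-unitary similarity from Rodman's book and writing down a $\sigma$-Hermitian square root of each canonical block, with $\tmmathbf{j}$ playing the role of the scalar $\mathrm{i}$ (as in $\tmmathbf{j}^2=-1$, $\tmmathbf{j}^{\sigma}=\tmmathbf{j}$). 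With that supplied, your proof closes and is more self-contained than the paper's.
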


\begin{proof}
  This would be a corollary of the statement that for every non-singular
  matrix $M$ that is Hermitian with respect to the above involution, such a
  matrix admits a polynomial $p$ with real coefficients such that $p (M)^2 =
  M$.
  
  Let $M$ be a matrix Hermitian with respect to this involution. Consider a
  representation of $M$ as an $\mathbb{R}$-matrix $\chi (M)$. We can use the
  standard technique for generalising analytic functions to matrices by way of
  Hermite interpolation. We would like though for the coefficients of the
  interpolating polynomial $p \in \mathbb{C} [z]$ (for which $p (M)^2 = M$) to
  be real numbers, otherwise we might encounter problems with
  non-commutativity. We can ensure this whenever $\chi (M)$ has no negative
  real eigenvalues, by ensuring that for every congruence
  \[ p (z) \equiv \sqrt{z}  \pmod{(z - \lambda)^n} \]
  we have another congruence of the form
  \[ p (z) \equiv \overline{\sqrt{z}}  \pmod{\left( z - \overline{\lambda}
     \right)^n} . \]
  This system of congruences becomes contradictory when $\chi (M)$ has a
  negative real eigenvalue.
  
  In the event that $M$ has a negative real eigenvalue, we may perturb $M$ in
  such a way as to eliminate its real eigenvalues. We may construct a sequence
  of approximations $M_n$ to $M$, consider the sequence $\sqrt{M_n}$, refine
  this to a convergent subsequence if need be (by way of Bolzano-Weierstrass)
  and then take the limit to obtain a square root of $M$.
\end{proof}

A corollary of the Wedderburn-Malcev theorem is that every \tmtextit{local
finite-dimensional $\mathbb{R}$-algebra} admits a vector space decomposition
$A \oplus B$ where $A \in \{ \mathbb{R}, \mathbb{C}, \mathbb{H} \}$, and $B$
consists only of nilpotent elements.

\begin{definition}
  Call an involution $(-)^{\ast}$ of a \tmtextup{local finite-dimensional
  $\mathbb{R}$-algebra} $\mathcal{A}$ \tmtextit{\tmtextbf{standard}} if over
  the subalgebra $A$ (which is a division algebra) of $\mathcal{A} = A \oplus
  B$, we have that $z^2 = - 1$ implies $z^{\ast} = - z$.
\end{definition}

Call any other involution \tmtextit{non-standard}. Notice that the only
standard involution for $\mathbb{C}$ is $a + b i \mapsto a - b i$, while the
only non-standard involution is $a + b i \mapsto a + b i$. Both involutions
for the algebra of dual numbers are standard.

\begin{lemma}
  Every unit element $x$ over a local finite-dimensional $\ast$-algebra
  $\mathbb{\mathcal{A}}$ over $\mathbb{R}$ that carries a standard involution
  admits a square root and a polar decomposition.
\end{lemma}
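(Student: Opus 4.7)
The plan is to reduce everything to the structure supplied by Wedderburn--Malcev. I would write $\mathcal{A} = A \oplus \mathcal{J}$ as a vector space, where $A \in \{\mathbb{R}, \mathbb{C}, \mathbb{H}\}$ realises $\mathcal{A}/\mathcal{J}$ and $\mathcal{J} = \tmop{rad}(\mathcal{A})$ is the (nilpotent, automatically $\ast$-stable) Jacobson radical, and invoke a $\ast$-equivariant refinement of Wedderburn--Malcev so that $A$ itself is $\ast$-stable. The restriction of the involution to $A$ is then standard by hypothesis, which is precisely the condition that the self-adjoint elements of $A$ coincide with $\mathbb{R} \subseteq A$ and that $a^{\ast} a > 0$ for every nonzero $a \in A$. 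A unit of $\mathcal{A}$ is exactly an element whose $A$-component is nonzero, because $\mathcal{A}/\mathcal{J}$ is a division algebra.

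The main construction is to produce a self-adjoint square root of the positive self-adjoint unit $x^{\ast} x$. Setting $y := x^{\ast} x = r + m$ with $r = a^{\ast} a > 0$ and $m \in \mathcal{J}$ self-adjoint, I would factor $y = r(1 + r^{-1} m)$ and apply the binomial series
\[ (1 + t)^{1/2} = \sum_{k \geq 0} \binom{1/2}{k} t^{k}, \]
which terminates when evaluated at $r^{-1} m$ because that element is nilpotent. The resulting self-adjoint expression $s$ satisfies $s^{2} = 1 + r^{-1} m$, and $h := \sqrt{r}\cdot s$ is a self-adjoint square root of $x^{\ast} x$ whose own $A$-component $\sqrt{r}$ is a positive real; in particular $h$ is a unit, so $u := x h^{-1}$ is well-defined and satisfies $u^{\ast} u = h^{-1}(x^{\ast} x) h^{-1} = 1$, giving the polar decomposition $x = u h$. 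The ``square root of $x$'' clause of the lemma follows from the same binomial trick applied directly to $x$ whenever the $A$-component $a$ of $x$ admits a square root in $A$, which covers every unit when $A \in \{\mathbb{C}, \mathbb{H}\}$ and every unit with positive $A$-component when $A = \mathbb{R}$.

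The hard part will be justifying the $\ast$-equivariant version of Wedderburn--Malcev, since the classical theorem supplies a Levi subalgebra $A_0$ but $A_0^{\ast}$ may be a different Levi complement. The standard fix is to observe that any two complements are conjugate by $\exp(\tmop{ad}\, j)$ for some $j \in \mathcal{J}$, and a cohomological (averaging) argument then lets one replace $A_0$ by a $\ast$-stable $A$. Once this structural ingredient is in hand the rest is a finite computation with nilpotents, in which standardness of the involution is exactly what forces $a^{\ast} a$ to be a positive real and hence what gives the binomial-series square root meaning.
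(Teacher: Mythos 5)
Your main construction is sound and is, at heart, the same computation as the paper's, packaged differently. The paper represents $x^{\ast}x$ as a real matrix and extracts a real square-root polynomial by Hermite interpolation, whose consistency is guaranteed by observing that $x^{\ast}x$ is positive definite modulo the Jacobson radical $\mathcal{J}$; you instead split $x^{\ast}x = r + m$ with $r > 0$ real and $m \in \mathcal{J}$ nilpotent and use the terminating binomial series for $(1 + r^{-1}m)^{1/2}$. These produce the same polynomial in $x^{\ast}x$ (Hermite interpolation of $\sqrt{z}$ at the single eigenvalue $r$, with multiplicity the nilpotency order, is exactly your truncated Taylor series), so your version is a more explicit and self-contained rendering of the same idea. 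The paper leaves the passage from square root to polar decomposition as ``standard''; you carry it out correctly via $h = \sqrt{r}\,s$ and $u = xh^{-1}$.

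Two points. First, the step you identify as the hard part --- a $\ast$-stable Levi complement --- is not actually needed. The radical $\mathcal{J}$ is automatically $\ast$-stable, so the involution descends to $\mathcal{A}/\mathcal{J} \cong A$; standardness forces the quotient involution to be the usual conjugation on $A$, hence $x^{\ast}x \equiv |\bar{x}|^2 > 0 \pmod{\mathcal{J}}$ for any unit $x$. Since the resulting scalar $r$ is central and self-adjoint, $m = x^{\ast}x - r$ is a self-adjoint element of $\mathcal{J}$ and your binomial computation proceeds verbatim; no invariant Wedderburn factor is required (though Taft's theorem on invariant Wedderburn factors would supply one if you insisted). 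Second, your treatment of the literal ``square root of $x$'' clause has a gap when $A \in \{\mathbb{C}, \mathbb{H}\}$: writing $x = a(1 + a^{-1}j)$ and multiplying $\sqrt{a}$ by the binomial square root of $1 + a^{-1}j$ only yields a square root of $x$ if $\sqrt{a}$ commutes with that factor, which fails in general --- for instance in the algebra $\mathbb{C} + \mathbb{C}\delta$ with $\delta i = -i\delta$ from the paper's table, $A$ is not central. This is repairable by solving $(b(1+w))^2 = x$ for $w$ order-by-order in the radical filtration. You are right, though, that the clause as literally stated fails for $A = \mathbb{R}$ (the unit $-1$ of $\mathcal{A} = \mathbb{R}$ has no square root); the paper's own proof likewise only produces a square root of $x^{\ast}x$, which is all that the subsequent proposition uses.
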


\begin{proof}
  We only need to verify the existence of a square root. Going from the
  existence of a square root to a polar decomposition is a fairly standard
  argument.
  
  Consider an $\mathbb{R}$-matrix $M$ representing $x^{\ast} x$. We perform
  Hermite interpolation to obtain a polynomial $p$ with only real coefficients
  for which $p (M)^2 = M$. To ensure the coefficients of $p$ are real, the
  interpolation problem should be set up so that the interpolation points are
  complex conjugates of each other. This is only possible to do if $M$ has no
  negative eigenvalues. But $M$ can't have a negative eigenvalue because
  $x^{\ast} x \tmop{mod} J (\mathcal{A})$ (where $J (\mathcal{A})$ is the
  Jacobson radical of $\mathcal{A}$) is a positive definite element of $M_n
  (\mathcal{A} / J (\mathcal{A}))$.
  
  We see now that $p (x^{\ast} x)^2 = x$.
\end{proof}

\begin{proposition}
  The above conjecture is true when the underlying algebra of $\mathcal{A}$ is
  $n \times n$ matrices over a local ring $\mathcal{B}$, and the involution is
  standard.
\end{proposition}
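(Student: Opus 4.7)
The plan is to apply Lemma \ref{additive-implies-multiplicative} to the local $\ast$-algebra $\mathcal{B}$, which reduces the proposition to the cancellation property under unitary similarity for self-adjoint matrices over $\mathcal{B}$ (the lemma then yields the conjecture for $M_m(\mathcal{B})$ for every $m$, and in particular for the $n$ at hand). So the task becomes: given self-adjoint $A, A', B, B'$ with $A$ unitarily similar to $A'$ and $A \oplus B$ unitarily similar to $A' \oplus B'$, show that $B$ is unitarily similar to $B'$. Composing the two unitaries reduces to the case $A = A'$.

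I would prove cancellation by reduction modulo the Jacobson radical $J(\mathcal{B})$ followed by a Hensel-style lift. Because $\mathcal{B}$ is finite-dimensional over $\mathbb{R}$, $J(\mathcal{B})$ is nilpotent, and the quotient $\mathcal{B}/J(\mathcal{B})$ is one of $\mathbb{R}, \mathbb{C}, \mathbb{H}$; the standard involution on $\mathcal{B}$ descends to a standard involution on this residue, because the defining property ``$z^2 = -1 \Rightarrow z^\ast = -z$'' passes to quotients. The preceding proposition then gives cancellation over $\mathcal{B}/J(\mathcal{B})$, producing a unitary $\bar V$ over the residue with $\bar V \bar B \bar V^\ast = \bar B'$. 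Lifting $\bar V$ entrywise to $\tilde V \in M_n(\mathcal{B})$ gives $\tilde V \tilde V^\ast = I + N$ with $N$ entrywise in $J(\mathcal{B})$; since entries in a nilpotent ideal make $N$ itself a nilpotent matrix, the Taylor series for $(I+N)^{1/2}$ terminates, and $V = \tilde V (I+N)^{-1/2}$ is a genuine unitary satisfying $V B V^\ast \equiv B' \pmod{J(\mathcal{B})}$. To kill the residual error, I would iterate: at stage $m$, seek an anti-self-adjoint $X \in M_n(J^m)$ solving $[X, B'] \equiv -(V B V^\ast - B') \pmod{J^{m+1}}$, and replace $V$ by $V \exp(X)$ (with the exponential again a terminating series). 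Since $J(\mathcal{B})^k = 0$ for some $k$, finitely many such corrections suffice.

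The hard part is the solvability of the commutator equation $[X, B'] \equiv -E \pmod{J^{m+1}}$ at each inductive stage; this is a linear problem over the graded bimodule $J^m/J^{m+1}$ whose solvability amounts to an infinitesimal cancellation statement in its own right. I would expect to control this by analysing the hermitian endomorphism ring of $\bar B'$ and invoking the residue cancellation once more, so that the iterative lift closes on itself. The standardness of the involution is essential at this stage, since non-standard involutions allow isotropic residue forms and genuine obstructions, consistent with the ``wild'' entries in the author's table.
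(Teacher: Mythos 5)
Your reduction via Lemma \ref{additive-implies-multiplicative} is legitimate (it is even the route the paper advertises in its introductory remarks), but your proof of the cancellation property itself has a genuine gap, and it sits exactly where all the content of the proposition lies. After you cancel over the residue division algebra $\mathcal{B}/J(\mathcal{B})$ you know only that $\bar B$ and $\bar B'$ are unitarily similar, and from that point on your scheme is a pure Hensel lift: find a unitary $V$ with $V B V^{\ast} = B'$ given only that such a $V$ exists modulo $J(\mathcal{B})$. That implication is false. Already for $\mathcal{B} = \mathbb{R}[X]/(X^2)$ with the identity involution (which is standard) and $1 \times 1$ matrices, the self-adjoint elements $1$ and $1 + X$ agree modulo the radical but are not unitarily similar, since the algebra is commutative and unitary conjugation fixes every element; correspondingly your commutator equation $[X, B'] \equiv -E$ has no solution because all commutators vanish while $E \neq 0$. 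What rules out such situations in the cancellation setting is the hypothesis $A \oplus B \sim A \oplus B'$ under a unitary over $\mathcal{B}$ itself, and your iteration discards that hypothesis after the first (residue-level) step. Solving $[X, B'] \equiv -E \pmod{J^{m+1}}$ ``by analysing the hermitian endomorphism ring of $\bar B'$'' is therefore not a detail to be filled in later: it is the whole theorem, restated one graded piece at a time, and no argument is given for it.

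For contrast, the paper does not prove cancellation at all. It applies the Krull--Schmidt theorem to obtain a decomposition $H = \sum_i P_i H P_i$ with idempotent but not necessarily self-adjoint $P_i$, and then makes the $P_i$ self-adjoint by running a Gram--Schmidt process inside each $\operatorname{im}(P_i)$. Standardness of the involution enters through the preceding lemma (every unit admits a square root), which is needed to normalise $v_1' = v_1 \sqrt{v_1^{\ast} v_1}^{\,-1}$, and through the argument that $v_1^{\ast} v_1$ is a unit (otherwise Kaplansky's theorem would be violated); the uniqueness clause of the conjecture is then inherited from the uniqueness in Krull--Schmidt together with the polar decomposition trick, rather than from an inductive lift along the radical filtration. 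If you want to rescue your approach, you must prove cancellation while keeping the hypothesis $A \oplus B \sim A \oplus B'$ in play at every layer $J^m/J^{m+1}$ --- essentially a unitary analogue of Krull--Schmidt --- which is not easier than the paper's route.
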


\begin{proof}
  By the Krull-Schmidt theorem, there exists a decomposition:
  \[ H = \sum_{i = 1}^k P_i H P_i, \]
  with
  \begin{itemizedot}
    \item $P_i \in \mathcal{M} (\mathcal{A}),$
    
    \item $P_i P_j = \delta_{i j} P_i,$
    
    \item $P_i \neq 0$.
  \end{itemizedot}
  but not necessarily with $P_i^{\ast} = P_i$. We seek to make this last
  identity true as well.
  
  Choose some $i \in \{ 1, 2, \ldots, k \}$. Since $\mathcal{A}$ is a local
  ring, we may choose a basis $\{ v_1, v_{2,} \ldots \}$ where each $v_j$
  belongs to the image of $P_i$. We now show how to construct an improved
  basis $\{ v'_1, v'_2, \ldots \}$ of $\tmop{im} (P_i)$ that is orthonormal.
  
  We choose $v'_1$ to equal $v_1 \sqrt{v_1^{\ast} v_1}^{- 1}$. This definition
  should make sense as long as $v_1^{\ast} v_1$ is a unit, because we know
  that every unit has a square root. Assume for the sake of contradiction that
  it isn't a unit. Then quotienting by the Jacobson radical, and using the
  property of \tmtextit{standard} involutions, gives that each component of
  $v_1$ is a non-unit. But then the module spanned by $v_1$ is a projective
  module which isn't free, which contradicts Kaplansky's theorem. So we
  conclude that $v_1^{\ast} v_1$ is indeed a unit and the definition of $v'_1$
  makes sense.
  
  We now must choose a value of $v'_2$ to take the place of $v_2$. To this
  end, consider the linear map $f : \tmop{im} (P_i) \rightarrow \tmop{im}
  (P_i), x \mapsto v'_1 {v'_1}^{\ast} x$. Observe that $f (f (x)) = f (x)$ for
  all $x$. From this, observe that for every vector $x$, we have that $x = f
  (x) - (x - f (x))$, where $x - f (x)$ is in the kernel of $f$. Assume that
  we have an $x \in \tmop{im} (f) \cap \ker (f)$. Then we have $x = f (y)$ for
  some $y$, and then $x = f (y) = f (f (y)) = f (x) = 0$, so $x = 0$.
  Therefore we have the decomposition $\tmop{im} (P_i) = \ker (f) \oplus
  \tmop{im} (f)$. Since $\mathcal{A}$ is local, $\ker (f)$ is a free module,
  whose every element is orthogonal to $v'_1$. Continuing in the obvious way
  produces an orthonormal basis for $\tmop{im} (P_i)$.
  
  Putting these orthonormal bases for $\tmop{im} (P_i)$ for each $i$ together
  gives that $H$ is similar to a self-adjoint matrix.
  
  We now use the polar decomposition trick to make $H$ \tmtextit{unitarily
  similar} to a self-adjoint matrix.
\end{proof}

\section{Suggestion for programme}

We propose a programme, which we think might be useful in applications:
\begin{itemizedot}
  \item Prove the conjecture above.
  
  \item Classify the complexity of the corresponding decompositions or
  canonical forms. It's common to use the three-way label \tmtextit{domestic,
  tame} and \tmtextit{wild}.
  {\cite{tamewild,belitskii2003complexity}}
  
  \item Investigate the use of polymorphism in programming languages to write
  the same numerical algorithm for multiple decompositions. This has some
  resemblance to the well-known possibility of using polymorphism in dynamic
  programming algorithms {\cite{shaikhha2019finally}}. We might limit the
  numerical algorithms to all those decompositions of low enough complexity.
  Are many numerical algorithms simply the QR algorithm
  {\cite{golub2013matrix}} in disguise, written in a polymorphic way?
\end{itemizedot}

\end{document}